\newtheorem{theorem}{Theorem}[section]
\theoremstyle{plain}
\newtheorem{lemma}[theorem]{Lemma}
\theoremstyle{remark}
\numberwithin{equation}{section}
\newtheorem*{remark}{Remark}
\newcommand{\zetaa}{\zeta_{\Phi,0}(s)}
\newcommand{\zetab}{\zeta_{\Phi,g_0}(s)}
\begin{document}

\title[]{A connection between discrete and regularized Laplacian determinants on fractals}

\author[Konstantinos Tsougkas]{Konstantinos Tsougkas}

\address{Konstantinos Tsougkas\\
         Department of Engineering\\
         University of Skövde\\
         54128 Skövde\\
         Sweden.}
        
\email{konstantinos.tsougkas@his.se}

\maketitle

\begin{abstract}
The spectral zeta function of the Laplacian on self-similar fractal sets has been previously studied and shown to meromorphically extend to the complex plane. In this work we establish under certain conditions a relationship between the logarithm of the determinant of the discrete graph Laplacian on the sequence of graphs approximating the fractal and the regularized determinant which is defined via help of the spectral zeta function. We then at the end present some concrete examples of this phenomenon.

\end{abstract}

\section{Introduction} \noindent

There has been considerable work on studying the spectrum of the Laplacian on fractals, see \emph{e.g.\@} \cite{chen2015spectral,chen2016singularly,fukushima1992spectral,kigami1993weyl,shima1991eigenvalue,shima1996eigenvalue,fractafold,strichartz2012exact,strichartz2012spectral}. After having an understanding of the spectrum, the spectral zeta function of the Laplace operator has then been studied in \cite{grabner,teplstei,teplyaev2004spectral,teplyaev} and was meromorphically extended to the entire complex plane. Its poles, also called \emph{complex dimensions} \cite{lapidus2000fractal}, are studied in \cite{teplstei} and it is proven that for a large class of p.c.f. fractals under certain symmetry conditions, that the locations of the poles can only be on the imaginary axis or on the axis where ${\rm Re}(s)=\frac{d_S}{2}$, where $d_S$ stands for the spectral dimension. There is an interesting connection between the Hausdorff, spectral and walk dimensions explored further in \cite{freiberg2007einstein}.

Research in theoretical physics such as
 \cite{akkermans2013statistical,
	akkermans2012wave,
akkermans2009physical, 
	akkermans2010thermodynamics, akkermans2013spontaneous,
	dunne2012heat,
    elizalde2012introduction,
	lauscher2005fractal,
	reuter2011fractal,tanese2014fractal} and in more classical works \cite{elizalde1994zeta,englert1987metric,hawking1977zeta,knizhnik1988fractal} has focused on spectral zeta functions and regularized determinants. Our mathematical motivation in this paper comes from the following two parts. Firstly, it comes from the following sentence as stated in \cite{grabner} and \cite{derfel2012laplace}:

``If there were no poles on the imaginary axis, then $e^{-\zeta'_{\Delta}(0)}$ would be the regularized product of eigenvalues or the Fredholm determinant of $\Delta$." 

Moreover, in \cite{karl} a connection has been established between the regularized determinant and the determinant of the discrete Laplacians in the case of Euclidean tori. Specifically, the authors of \cite{karl} obtained the formula
\begin{equation*}
\log{\det\Delta_{DT_{N(u)}}}=V(N(u))\mathcal{I}_d(0)+\log{u^2}+\log{\det\Delta_{RT,A}}+o(1) \text{ as } u\rightarrow \infty,
\end{equation*}
where $RT,A$ is the real torus $A \mathbb{Z}^d / \mathbb{R}^d$ and $DT,A$ the discrete tori while $\mathcal{I}$ is a specific special function. Inspired by the work of \cite{karl}, in \cite{chen2018regularized} a similar connection has been established in the fractal case for the $N$-dimensional double Sierpi\'nski gasket and the double $pq$-model. The goal of this paper is to expand on the work of \cite{chen2018regularized}, generalize the result to a wider class of fractal sets and shed perhaps some more light as to why the calculations in \cite{chen2018regularized} appear to work exactly as they do giving rise to such a connection between the discrete and regularized determinants. 

Our results are that under certain assumptions the spectral zeta function of the Laplacian on certain fractafolds has no poles on the imaginary axis and there also exists a connection between the logarithm of the regularized determinant and the logarithm of the discrete ones of the form
$$\log \det \Delta_n = c\,m^n+nj\log{\lambda} +\log\det\Delta$$
resembling that of \cite{karl}. We arrive at some general formulas for the determinants when establishing this connection and at the end of the paper we present a series of examples where by applying the proper values to our formulas for each specific fractal we show how we can recreate the results in \cite{chen2018regularized} by applying these new more general formulas. We then also present another example which to our knowledge does not exist in the literature showing how our results can also be used even in fractals that are not technically self-similar as long as our assumptions about spectral decimation hold.

We will be using the analysis on fractals perspective studied in \cite{kigami2001analysis, strichartz2006differential} and their references and we will mostly study self-similar fractafolds such as those in \cite{fractafold}. If we have a compact connected metric space $(X,d)$, and injective contractions $F_i: X \rightarrow X$, $i \in \{1,2,..., m\}$, then there exists a unique non-empty compact set $K \subset X$ such that 
$$K=\bigcup_{i=1}^m F_i(K).$$
This is called a self-similar set and from now on we will restrict our attention only to post-critically finite ones. A fractafold $F$ based on the self-similar set $K$ is a connected Hausdorff topological space such that every point of it has a neighborhood homeomorphic to a neighborhood in $K$. We refer the reader to \cite{fractafold} for more details. An example is the double cover of the Sierpi\'nski gasket, where we take two copies of the Sierpi\'nski gasket and glue them at the boundary points.

We can approximate a p.c.f. self-similar set with a sequence of graphs $\{G_n : n\geq 0\}$, starting from the complete graph on the boundary vertices and then taking copies of it with appropriate vertex identifications and continuing recursively at every level. In these sequence of approximating graphs we can study the discrete graph Laplacians given by $\Delta_n f(x) = \frac{1}{deg(x)}\sum_{y\sim x} (f(x) - f(y))$ where the summation is over all neighboring vertices in the n'th level graph approximation. This is the probabilistic graph Laplacian but of course there are other versions such as the combinatorial one or assigning weights at each edge.

On the other hand, it is also possible to define the Laplace operator on the fractal $K$ itself. It can be given as a limit of the discrete Laplacians after a normalization with a time--scaling factor $\lambda >1$
$$\Delta  = \lim_{n \to \infty} \lambda^n \Delta_n$$
and there is another equivalent way of defining it through weak integration via a measure, usually the Hausdorff one, and the existence of an energy form.

For a differential non-negative self-adjoint operator with compact resolvent, such as the Laplace operator with Neumann or Dirichlet conditions, its spectrum is discrete and consists of discrete eigenvalues written as
$$0 \leqslant \lambda_1 \leq \lambda_2 \leqslant \lambda_3 \leqslant \dots$$
with $\lambda_n \rightarrow \infty$. For such an operator $\Delta$ with discrete spectrum, its spectral zeta function is defined to be
$$\zeta_{\Delta}(s)=\text{Tr}\left\{\frac{1}{\Delta^s}\right\}=\sum_n \frac{1}{\lambda_n^s},$$
where the zero eigenvalue is excluded if it exists and eigenvalues are counted along with their multiplicities. Alternatively, it can be defined via Mellin transform through the heat kernel trace $K(t)$ as
$$\zeta_{\Delta}(s)=\frac{1}{\Gamma(s)}\int_0^{\infty} (K(t)-1) t^{s-1} dt$$
with the minus term being to remove the zero eigenvalue. We are interested in the product of the non-zero eigenvalues which we will abuse terminology and denote as the determinant of the operator. Formally it is 
$$\det\Delta=\prod_{i=1}^{\infty} \lambda_i$$
but of course this diverges to infinity. It is possible though to give it a real value through regularization via the spectral zeta function. Doing some formal calculations it can be seen that
$$ \zeta'_{\Delta}(0)=-\sum_{i=1}^{\infty} \log{\lambda_i}=\log{\prod_{i=1}^{\infty} \lambda_i}=-\log{\det\Delta},$$
so we can define the regularized determinant of the operator $\Delta$ to be $\det\Delta=e^{-\zeta'_{\Delta}(0)}$.

The spectrum of the Laplacian on fractals is obtained through spectral decimation. It is a process which recursively gives the eigenvalues at each level of the graph approximations from those of the previous level via an inverse image of a rational function $R$. Then for the fractal itself, we obtain the eigenvalues by taking a renormalized limit of those pre-images. Specifically, spectral decimation means that all eigenvalues of $\Delta$ are given by
\begin{equation}
\label{decimlimit}   
-\lambda^m \lim_{n\rightarrow \infty} \lambda^n R^{(-n)}(w)
\end{equation}
for $w$ in some fixed finite set and $\lambda>1$ is the time-scaling factor. The limit needs to exist and thus the branches of the pre-images $R^{-(n)}(w)$ are taken in such a way that it does exist. We will only study the cases here where the rational function turns out to be a polynomial. 

For the graph sequence $G_n$ approximating a self-similar fractal set, the number of spanning trees $\tau(G_n)$ has been studied in \cite{teufl2007enumeration, TeWa11} and in their references there. Using a different methodology so as to apply Kirchhoff's Matrix-Tree theorem for those fractals that spectral decimation holds the determinant of the graph Laplacians has been calculated in \cite{anema-tsougkas2016}
\begin{equation} 
{\label{eq:discretedet}}
\det\Delta_n=\left(\prod_{w \in A}w^{mult_n(w)}\right)\left[\prod_{w \in B}\left(w^{\sum_ {k=0}^{n}{mult_n^k(w)}}\left(\frac{-1}{a_d}\right)^{\sum_{k=0}^n mult_n^k(w)\left(\frac{d^k-1}{d-1}\right)}\right)\right]
\end{equation}
 where $d$ is the degree of the rational function and $a_d$ is the leading coefficient of the polynomial in its numerator and $mult_n^k(w)=mult_nR^{-k}(w)$ with $mult_n(w)$ being the multiplicity of the eigenvalue $w$ of $\Delta_n$. From now on we will be denoting the discrete graph Laplacians as $\Delta_n$ and we will mostly be referring to the probabilistic ones but the same results would apply if the combinatorial ones were used in the cases that spectral decimation holds. Moreover, in  \cite{LY05} the \emph{asymptotic complexity constant} of the graph $G$ was introduced, defined as
\begin{equation}
\label{eq:asympconst}    
c=\lim_{n \rightarrow \infty} \frac{\log{\tau(G_n)}}{|V_n|}
\end{equation}
if that limit exists and this constant will appear also in our work here.

Let $R(z)=a_dx^d+\dots+\lambda x$ be a polynomial with coefficients that are real numbers and $d\geq 2$ which also satisfies $R(0)=0$ and $R'(0)= \lambda > 1$ which is the case for the spectral decimation polynomial. We call $\Phi$ the entire function that is a solution to the following functional equation 
$$\Phi (\lambda z)=R(\Phi (z)) \text{ with } \Phi(0)=0, \, \Phi ' (0)=1.$$
Under the above, we can study polynomial zeta functions defined as
$$\zeta_{\Phi,w}(s)= \sum\limits_{\substack{\Phi(-\mu)=w \\ \mu >0}} \mu^{-s} \ \, \text{ or equivalently } \, \, \zeta_{\Phi,w}(s)= \lim_{n \rightarrow \infty} \sum_{z \in R^{-n}(w)} (\lambda^n z)^{-s}.$$
They appear in the calculations of the spectral zeta function $\zeta_{\Delta}(s)$ and can be meromorphically extended in $\mathbb{C}$. The following specific values are known about them. For $w<0$,
$$\zeta_{\Phi,w}(0)=0 \hspace{0.75cm} \text{ and } \hspace{0.75cm} \zeta'_{\Phi,w}(0)=-\frac{\log{a_d}}{d-1}-\log{(-w)}$$
and for $w=0$ we have that
$$\zeta_{\Phi,0}(0)=-1 \hspace{0.75cm} \text{ and } \hspace{0.75cm} \zeta'_{\Phi,0}(0)=-\frac{\log{a_d}}{d-1}.$$
Their poles are all simple and lie on the imaginary line ${\rm Re}(s)=\frac{\log{d}}{\log{\lambda}}$. Further information may be found at \cite{grabner, derfel2012laplace,teplyaev} where these zeta functions have been originally defined and studied. These polynomial zeta functions and their values will be crucial to our results.

\section{Main}

In \cite{bajorin2008vibration, malozemov2003self} spectral decimation was obtained for self-similar sets satisfying the full symmetry assumption. This procedure describes in complete detail the spectrum of the probabilistic graph Laplacian. We refer the reader to \cite{bajorin2008vibration, malozemov2003self} for more details regarding the spectral decimation process. We will first be looking at the cases where $\frac{\log d}{\log\lambda}< \frac{d_S}{2}$ and thus $d\neq m$ since $d_S = \frac{2\log m}{\log\lambda}$ is the value of the spectral dimension.

Spectral decimation for the sequence of approximating graphs essentially means, using the language of \cite{anema-tsougkas2016}, that we have two finite sets $A$, $B$ which are used to create all subsequent eigenvalues. All eigenvalues of the finite graphs can only possibly be of the following form: 

\begin{itemize}
\item 0 is always a simple eigenvalue.
\item The remaining eigenvalues of the initial graph $G_0$. Since $G_0$ is the complete graph on $|V_0|$ vertices then in the case of the standard probabilistic graph Laplacian those are $|V_0|-1$ copies of $\frac{|V_0|}{|V_0|-1}$.
\item Elements of the finite exceptional set.
\item Preiterates of the above under $R$.
\end{itemize}
We want to study the minimal sets out of which all the eigenvalues are created, that is the minimal sets in terms of generation of birth out of which every eigenvalue is either in the set $A$ or preiterates coming from the elements of the set $B$. The sets $A$ and $B$ can be described as in \cite{anema-tsougkas2016} as $A$ comprising of eigenvalues that taking preiterates is not allowed, that is for $ w \in A$ then $R^{-k}(w) \cap E \neq \emptyset$ for some k, and the set $B$ where taking preiterates is allowed forever in the sense that we never encounter a forbidden eigenvalue by doing so.

Note that the set $B$ can include elements of the exceptional set itself. We want these two sets $A$, $B$ to be minimal in the sense that every eigenvalue of the fractal is obtained by a limit of the form \eqref{decimlimit} and we are taking the lowest generation of birth possible that doesn't give us problems. For our purposes here we will also separate the set $B$ into two disjoint sets, $B_0,B_1$ where $B_0=R^{-1}(0)\setminus\{0\}$ are the eigenvalues coming from preiterates of $0$ and $B_1$ the remaining ones which necessarily can only be from the remaining branches coming from $G_0$ and the exceptional set. By construction, $B_0 \cap B_1 = \emptyset$ and we can think of $B_0$ as the one created by the $0$ branch and $B_1$ the remaining branches. 

For our results we will require explicit knowledge of the multiplicities of the eigenvalues and in this work we will only be looking at the following cases where the multiplicities of the eigenvalues of the discrete graph Laplacian in the graph approximations are of the following form. The set $A$ has multiplicities of exponential form after the first couple levels and so does the set $B_1$ while all the elements in $B_0$ must have the same multiplicities but now can have added an additional constant $j \in {0,1,2}$. This $j$ is due to the possible cases found in \cite{bajorin2008vibration}. Specifically we require that,

\begin{itemize}

\item For $w\in A$ we have $mult_n(w)=c_w^n$ for $n <2$ and $mult_n(w)=c_wm^n$ for $n \geq 2$.

\item For $w \in B_0$ there is a fixed $c_0$ such that $mult_n(w)=c_0m^n+j$ for $n \geq 1$ where $j\in \{0,1,2\}$.

\item For $w\in B_1$ we have $mult_n(w)=c_w^n$ for $n < 2$ and $mult_n(w)=c_wm^n$ for $n \geq 2$.

\end{itemize}

We will present at the last section of the paper a list of some examples where these conditions hold and further illustrate the use of this notation. Under those assumptions, our main result is the following.

\begin{theorem}
Let a  p.c.f. self-similar fractafold such that its spectrum can be obtained via spectral decimation and the spectrum of its graph approximations is of the form above. Then its spectral zeta function has no poles on the imaginary axis and we have that for $n > 1$
$$\log \det \Delta_n = c\,m^n+nj\log{\lambda} +\log\det\Delta$$
where $\det\Delta$ is the regularized determinant, $\lambda$ the time-scaling constant and $j\in \{0,1,2\}$.
\end{theorem}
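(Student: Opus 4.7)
The plan is to expand both sides of the asserted identity---on the discrete side using the closed form \eqref{eq:discretedet}, on the regularized side using the decomposition of $\zeta_\Delta$ into polynomial zeta functions $\zeta_{\Phi,w}$---and then match the resulting asymptotic expansions in $n$.

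First, I would take the logarithm of \eqref{eq:discretedet} and substitute the three multiplicity forms. For $w\in A$, the contribution is directly $c_w m^n\log w$. For $w\in B_0\cup B_1$, the identity $\text{mult}_n^k(w)=\text{mult}_{n-k}(w)$ reduces the inner sums $\sum_{k=0}^n \text{mult}_n^k(w)$ and $\sum_{k=0}^n \text{mult}_n^k(w)(d^k-1)/(d-1)$ to combinations of geometric series in $m$ and $d$. Under the standing hypothesis $d\ne m$, the $c_w m^{n-k}$ pieces produce terms of order $m^n$ together with $O(1)$ remainders, while the additional $+j$ defect appearing only in $B_0$ produces terms linear in $n$. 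The spurious $d^n$-contributions arising from the exponent $(d^k-1)/(d-1)$ must cancel between the $w^{\cdots}$ and $(-1/a_d)^{\cdots}$ factors in \eqref{eq:discretedet}, a cancellation I would verify using Vieta's relations on $R(x)/x = a_d x^{d-1}+\cdots+\lambda$, which give $\prod_{w\in B_0} w$ in closed form in terms of $\lambda$ and $a_d$.

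Second, I would compute $-\zeta'_\Delta(0)$. By spectral decimation and \eqref{decimlimit}, the spectral zeta function decomposes as a finite linear combination $\zeta_\Delta(s)=\sum_w \beta_w(s)\zeta_{\Phi,w}(s)$, where $\beta_w(s)$ is a geometric-type function of $m\lambda^{-s}$ encoding the multiplicity growth $c_w m^n$ (plus the $+j$ correction for $w\in B_0$). Differentiating at $s=0$ and inserting the tabulated values $\zeta_{\Phi,w}(0)\in\{0,-1\}$ and $\zeta'_{\Phi,w}(0)=-\log a_d/(d-1)-\log(-w)$ then yields $\log\det\Delta$ as an explicit linear combination of $\log(-w)$ and $\log a_d$ across the branches; the factor $(1-m\lambda^{-s})^{-1}$ in $\beta_w(s)$ is the source of the $\log\lambda$ terms upon differentiation at $s=0$.

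Matching proceeds term by term. The $m^n$-coefficient is identified with the asymptotic complexity constant $c$ of \eqref{eq:asympconst} through Kirchhoff's matrix-tree theorem and $|V_n|\sim m^n$; the $n$-linear piece, which must come solely from the $+j$ correction in $B_0$, reduces to $nj\log\lambda$ after Vieta's formula for $\prod_{w\in B_0} w$ is substituted; and the $O(1)$ remainder must coincide with $\log\det\Delta$ once the $\log a_d$ and $\log(-w)$ contributions from $\zeta'_{\Phi,w}(0)$ are shown to balance the constant pieces extracted on the discrete side. The no-pole claim follows because $\zeta_\Delta(s)$ is a finite combination of terms of the form $\zeta_{\Phi,w}(s)(1-m\lambda^{-s})^{-1}$, whose only poles sit on the vertical lines $\text{Re}(s)=\log d/\log\lambda$ and $\text{Re}(s)=\log m/\log\lambda$, both strictly positive under $d\ge 2$, $m\ge 2$, $\lambda>1$. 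The main obstacle I anticipate is the bookkeeping required to exhibit the $d^n$-cancellation on the discrete side and the precise alignment of the $\log a_d$ and $\log(-w)$ constants between the two sides across the three branch classes $A$, $B_0$, $B_1$; no individual step is deep, but the interaction between the geometric sums in $m$, in $d$, and in $m\lambda^{-s}$ is the combinatorial heart of the argument.
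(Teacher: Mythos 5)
Your overall architecture matches the paper's (decompose $\zeta_\Delta$ into polynomial zeta functions, compute $-\zeta_\Delta'(0)$ from the tabulated special values, expand $\log\det\Delta_n$ from \eqref{eq:discretedet} via the geometric sums, and match), but two of the steps you wave at are precisely where the real content lies, and as described they would fail. First, the no-pole claim: you assert that $\zeta_\Delta$ is a combination of terms $\zeta_{\Phi,w}(s)(1-m\lambda^{-s})^{-1}$ with poles only on $\mathrm{Re}(s)=\log d/\log\lambda$ and $\mathrm{Re}(s)=\log m/\log\lambda$. But the $+j$ defect in the $B_0$ multiplicities contributes $\sum_{k\ge 1} j\lambda^{-ks}=j/(\lambda^s-1)$ to the generating function, and this \emph{does} have poles on the imaginary axis (at $\lambda^s=1$). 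The paper cancels them via Lemma \ref{lem:preit}, namely $\sum_{w\in B_0}\zeta_{\Phi,w}(s)=(\lambda^s-1)\zeta_{\Phi,0}(s)$, which supplies exactly the zero of $\lambda^s-1$ needed; your argument omits this and so does not establish the first assertion of the theorem.

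Second, the $d^n$-cancellation on the discrete side is not a Vieta computation. Vieta (Lemma \ref{lem: prod}) only gives $\prod_{w\in B_0}(-w)=\lambda/a_d$, which converts the $B_0$ block of \eqref{eq:discretedet} into $\lambda^{\sum_k \mathrm{mult}_n^k}a_d^{-\sum_k \mathrm{mult}_n^k d^k}$; after this, $d^n\log a_d$ terms still survive in \emph{both} the $B_0$ and $B_1$ blocks, and they do not cancel within either block. They cancel only across the two blocks, and only because of the identity
$$\sum_{w\in B_1}\left(c_w^0+\frac{c_w^1}{d}+\frac{c_w m^2}{d(d-m)}\right)+\frac{c_0 m(d-1)}{d-m}+j=0,$$
which is a genuine constraint on the multiplicity data. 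The paper obtains it analytically: by \cite{teplstei} the poles of $\zeta_\Delta$ lie only on the imaginary axis or on $\mathrm{Re}(s)=d_S/2=\log m/\log\lambda$, while each $\zeta_{\Phi,w}$ has poles on $\mathrm{Re}(s)=\log d/\log\lambda<\log m/\log\lambda$; hence the geometric prefactor in \eqref{eq:zetagen} must vanish at $\lambda^s=d$, which is exactly the displayed identity. Without importing this (or proving the identity combinatorially), your term-by-term matching leaves an uncancelled $d^n\log a_d$ contribution and the asserted form $c\,m^n+nj\log\lambda+\log\det\Delta$ does not emerge.
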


\begin{remark}
If instead of the probabilistic graph Laplacian we consider the combinatorial one on graphs without loops or multiple edges and replacing $m^n$ with $|V_n|$ then the new adjusted $c$ will give us the asymptotic complexity constant given by \eqref{eq:asympconst}.
\end{remark}

It is interesting to consider when we can possibly have number of vertices that are of exponential form $|V_n| = Km^n$ in the graph approximations since in that case by looking at the cases of multiplicities in \cite{bajorin2008vibration} we can have multiplicies of the form discussed above. The following argument shows that having regular graphs on the double copy ensures that it has vertices of exponential form. Indeed, take a p.c.f self-similar set and create a double copy of it by gluing it at its corresponding boundary points. We know that for a graph, $\sum_{v\in V} deg(v)=2|E|$ and for the double graph $G_n$ we have that $|E_n|=|V_0|(|V_0|-1)m^n$. Since the graph approximations are $k$-regular, we get that
$$|V_n|=\frac{2|V_0|(|V_0|-1)}{k}m^n.$$

Before presenting the proof of our theorem we will start with the following lemmas which are needed for the technical part of the proof.

\begin{lemma}
\label{lem: prod}
Let $0,w_1,\dots, w_{d-1}$ be the roots of the spectral decimation polynomial $R(z)$ where $degR(z)=d$. Then we have that 
$$\prod_{i=1}^{d-1} w_i=(-1)^{d-1}\frac{\lambda}{a_d}$$
where $\lambda$ is the time-scaling constant.
\end{lemma}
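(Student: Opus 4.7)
The plan is to use Vieta's formulas directly. Since $R(0) = 0$, the polynomial $R(z) = a_d z^d + a_{d-1} z^{d-1} + \cdots + a_2 z^2 + \lambda z$ has no constant term, so $z$ is a factor. Factoring out the root at $0$, I would write
\[
R(z) = a_d\, z\, (z-w_1)(z-w_2)\cdots (z-w_{d-1}),
\]
where $w_1, \dots, w_{d-1}$ are the remaining roots.

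The key step is to compare the coefficient of $z$ on both sides of this factorization. On the left the coefficient of $z$ is $\lambda$ (from the assumption $R'(0) = \lambda$). On the right, the coefficient of $z$ comes from taking the $z$ factor together with the constant term of each of the $d-1$ linear factors $(z - w_i)$, giving
\[
a_d \cdot (-w_1)(-w_2)\cdots(-w_{d-1}) = a_d (-1)^{d-1} \prod_{i=1}^{d-1} w_i.
\]
Equating the two expressions and solving yields
\[
\prod_{i=1}^{d-1} w_i = (-1)^{d-1}\,\frac{\lambda}{a_d},
\]
since $(-1)^{d-1}$ is its own reciprocal.

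There is no real obstacle here; the lemma is essentially a one-line application of Vieta's formulas to the reduced polynomial $R(z)/z$, whose constant term is $\lambda$ and leading coefficient is $a_d$. The only thing to be careful about is keeping track of the sign $(-1)^{d-1}$ coming from the $d-1$ nonzero roots.
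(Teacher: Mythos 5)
Your proof is correct and is essentially the same argument as the paper's: the paper invokes Vieta's formula for the sum of products of $d-1$ roots (which collapses to the single product of the nonzero roots since $0$ is a root) and identifies $a_1=R'(0)=\lambda$, which is exactly what your factorization $R(z)=a_d z\prod_{i=1}^{d-1}(z-w_i)$ and comparison of the coefficient of $z$ accomplishes. The sign bookkeeping $(-1)^{d-1}$ is handled correctly in both.
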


\begin{proof}
By Vieta's formulas we have that the sum of all products of $d-1$ roots is equal to $(-1)^{d-1}\frac{a_1}{a_{d}}$ due to the fact that all other terms become 0 since 0 is also a root and the only term which remains is the product of the non-zero roots. We can now obtain the result from the fact that $R'(0)=\lambda$.
\end{proof}
We will also be needing a way to relate the polynomial spectral zeta functions of preiterates of an element with that of the element itself. This lemma will be key in obtaining the cancellation of the poles on the imaginary axis.
\begin{lemma}
\label{lem:preit}
We have that 
$$\sum_{v\in R^{-1}(\{w\})} \zeta_{\Phi,v}(s)=\lambda^s\zeta_{\Phi,w}(s).$$
Specifically, regarding $B_0$ as a multiset we have that 
$$\sum_{w\in B_0} \zeta_{\Phi,w}(s)=(\lambda^s-1)\zeta_{\Phi,0}(s).$$
\end{lemma}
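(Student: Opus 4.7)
The plan is to exploit the limit definition $\zeta_{\Phi,w}(s)=\lim_{n\to\infty}\sum_{z\in R^{-n}(w)}(\lambda^n z)^{-s}$ and the obvious set-theoretic identity
$$\bigsqcup_{v\in R^{-1}(w)} R^{-n}(v)=R^{-(n+1)}(w),$$
together with an index shift that produces the factor $\lambda^s$. First I would fix $w$ and, for each $v\in R^{-1}(w)$, write $\zeta_{\Phi,v}(s)$ using the limit definition (with the convention that $z=0$ is excluded from the summand, which is automatic whenever $v\ne 0$). Summing over the finitely many $v\in R^{-1}(w)$ lets me interchange the finite sum with the limit, and the disjoint-union identity above collapses the double sum into a single sum over $R^{-(n+1)}(w)$.

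Next I would perform the reindexing $m=n+1$. Since $\lambda^n z=\lambda^{-1}\lambda^m z$, the prefactor $\lambda^s$ pops out of the summand uniformly in $z$, giving
$$\sum_{v\in R^{-1}(w)}\zeta_{\Phi,v}(s)=\lambda^s\lim_{m\to\infty}\sum_{z\in R^{-m}(w)}(\lambda^m z)^{-s}=\lambda^s\zeta_{\Phi,w}(s),$$
which establishes the first identity. The only point worth checking carefully is the $w=0$ case: since $0\in R^{-1}(0)$, one must verify that excluding $z=0$ once on the right-hand side is consistent with excluding it once on the left (inside the term $\zeta_{\Phi,0}(s)$ coming from $v=0$). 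This is fine because $R^{-n}(v)$ for $v\ne 0$ in $R^{-1}(0)$ never contains $0$, so the overall collection $\bigsqcup_v R^{-n}(v)\setminus\{0\}$ matches $R^{-(n+1)}(0)\setminus\{0\}$ exactly.

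For the second identity, since $B_0=R^{-1}(0)\setminus\{0\}$ by definition, I would simply apply the first identity with $w=0$ and then split off the $v=0$ summand:
$$\sum_{w\in B_0}\zeta_{\Phi,w}(s)=\sum_{v\in R^{-1}(0)}\zeta_{\Phi,v}(s)-\zeta_{\Phi,0}(s)=\lambda^s\zeta_{\Phi,0}(s)-\zeta_{\Phi,0}(s)=(\lambda^s-1)\zeta_{\Phi,0}(s).$$

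There is no substantial obstacle here; the lemma is essentially a bookkeeping statement about how the self-similar structure of the preiteration tree interacts with the normalizing factor $\lambda^n$. The only mild care required is treating the $z=0$ exclusion consistently and making sure the interchange of the finite sum with the limit is legitimate, which is immediate because $|R^{-1}(w)|\le d$ is finite.
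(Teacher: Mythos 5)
Your argument is correct, but it runs through the other of the two equivalent definitions of $\zeta_{\Phi,w}$ and is therefore a genuinely different route from the paper's. The paper works with $\zeta_{\Phi,w}(s)=\sum_{\Phi(-\mu)=w,\,\mu>0}\mu^{-s}$ and the functional equation $\Phi(\lambda z)=R(\Phi(z))$: it observes that $\Phi(z)=w_i$ iff $\Phi(\lambda z)=w$ and $\Phi(z)\neq w_j$ for $j\neq i$, which yields the inclusion--exclusion identity $\zeta_{\Phi,w_i}(s)=\lambda^s\zeta_{\Phi,w}(s)-\sum_{j\neq i}\zeta_{\Phi,w_j}(s)$ for each individual preimage $w_i$; the lemma follows by moving the sum over $j\neq i$ to the left. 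You instead take the limit definition $\zeta_{\Phi,w}(s)=\lim_n\sum_{z\in R^{-n}(w)}(\lambda^n z)^{-s}$, use the (multiset) decomposition $R^{-(n+1)}(w)=\bigsqcup_{v\in R^{-1}(w)}R^{-n}(v)$, and reindex to extract $\lambda^s$. Your version is more direct for the statement as given, since it produces the sum over all of $R^{-1}(w)$ in one step, and your explicit check that the exclusion of $z=0$ is consistent in the $w=0$ case is a point the paper handles implicitly through the condition $\mu>0$. The paper's version yields the slightly finer per-preimage relation and sidesteps any bookkeeping about multiplicities of roots in $R^{-1}(w)$ and branch choices in the iterated preimages, which your multiset convention has to absorb. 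Both arguments implicitly establish the identity for $\mathrm{Re}(s)$ large and extend it by meromorphic continuation; neither spells this out, so that is not a defect specific to your write-up.
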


\begin{proof}
Let $R^{-1}(\{w\})=\{ w_1,...,w_d \}$. By using the functional equation $\Phi(\lambda z)=R(\Phi(z))$ we can observe that 
$$\Phi(z)=w_i  \iff \Phi(\lambda z)=w \, \, \text{ and } \, \,  \Phi(z)\neq  w_j \, \, \, \forall \,  j\in \{1, \dots d\}\setminus\{i\}. $$
Then considering the polynomial zeta functions we have that
\begin{equation*}
\begin{split}
\zeta_{\Phi,w_i}(s)&= \sum_{\substack{\Phi(-\mu)=w_i \\ \mu >0}} \mu^{-s}=\quad \sum_{\mathclap{\substack{\Phi(-\lambda\mu)=w  \\ \Phi(-\mu) \neq w_j  \,  \forall  \, j \neq i \\ \mu >0}}} \mu^{-s}\\
&=  \sum\limits_{\substack{\Phi(-\lambda\mu)=w \\ \mu >0}} \mu^{-s} -\sum_{j\neq i}\sum\limits_{\substack{\Phi(-\mu)=w_j \\ \mu >0}} \mu^{-s}\\
&= \lambda^s\zeta_{\Phi,w}(s)-\sum_{j\neq i}\zeta_{\Phi, w_j}(s).
\end{split}
\end{equation*}
The second claim of the lemma is now immediate by the definition of $B_0$ and the fact that $0$ is always a fixed point of $R$.
\end{proof}

Because of their appearence in \eqref{eq:discretedet} we will also need the following simple geometric sums which can be immediately calculated by the fact that $mult_n^k=mult_{n-k}$ and using the multiplicities formulas from above.
\begin{lemma}
\label{lem:sums}
For $w \in B_1$ it holds that
\begin{equation*}
\begin{split}
&\sum_{k=0}^n mult_n^k(w)=c_w^0 + c_w^1 + \frac{c_w(m^{n+1}-m^2)}{m-1}\\
&\sum_{k=0}^{n} \text{mult}_n^k(w)\frac{(d^k-1)}{d-1} =
\frac{c_w}{d-1}\left(\frac{m^2d^n-dm^{n+1}}{d(d-m)} + \frac{m^2-m^{n+1}}{m-1}\right)\\
& \hspace{4cm} +c_w^1\frac{d^{n-1}-1}{d-1}+c_w^0\frac{d^n-1}{d-1}\\
&\\
&\text{and for \(w\in B_0\) it holds that}\\
&\\
&\sum_{k=0}^{n} \text{mult}_n^k(w) =c_0\frac{m^{n+1}-m}{m-1}+jn\\
&\sum_{k=0}^{n} \text{mult}_n^k(w)d^k = c_0\frac{m(d^{n}-m^n)}{d-m}+j\frac{d^n-1}{d-1}
\end{split}
\end{equation*}
\end{lemma}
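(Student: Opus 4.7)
The plan is to verify all four identities by the reindexing trick $\text{mult}_n^k(w) = \text{mult}_{n-k}(w)$, followed by direct evaluation of geometric series. This identity follows from spectral decimation: the multiplicity of a $k$-th preiterate of $w$ in $\Delta_n$ equals the multiplicity of $w$ itself in $\Delta_{n-k}$. Substituting $\ell = n - k$ in each sum therefore converts it into a sum of $\text{mult}_\ell(w)$ (possibly weighted by $d^{n-\ell}$) for $\ell = 0, \ldots, n$, at which point one plugs in the explicit piecewise formulas for $\text{mult}_\ell(w)$ listed in the assumptions above the theorem.

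For $w \in B_1$, the piecewise formula is $\text{mult}_\ell(w) = c_w^\ell$ for $\ell \in \{0,1\}$ and $\text{mult}_\ell(w) = c_w m^\ell$ for $\ell \geq 2$. I would split the first sum as
\[
\sum_{\ell=0}^n \text{mult}_\ell(w) = c_w^0 + c_w^1 + c_w \sum_{\ell=2}^n m^\ell
\]
and evaluate the geometric tail to obtain $c_w(m^{n+1} - m^2)/(m-1)$, which is exactly the first claim. For the second identity I would split $(d^k - 1)/(d-1) = d^k/(d-1) - 1/(d-1)$, reuse the first identity to handle the $-1/(d-1)$ contribution, and treat the $d^k$ part via
\[
\sum_{\ell=0}^n \text{mult}_\ell(w)\, d^{n-\ell} = c_w^0 d^n + c_w^1 d^{n-1} + c_w \sum_{\ell=2}^n m^\ell d^{n-\ell},
\]
with the last geometric sum collapsing to $c_w(m^2 d^{n-1} - m^{n+1})/(d-m)$. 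Collecting all terms over the common factor $1/(d-1)$, recognizing $c_w^0$ and $c_w^1$ as producing the $(d^n-1)/(d-1)$ and $(d^{n-1}-1)/(d-1)$ coefficients, and rewriting the $c_w$ contribution as $(m^2 d^n - dm^{n+1})/(d(d-m)) + (m^2 - m^{n+1})/(m-1)$, then matches the stated expression verbatim.

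For $w \in B_0$ the argument is essentially identical but simplified by the observation that $\text{mult}_0(w) = 0$: since $B_0 \subset R^{-1}(0) \setminus \{0\}$, no element of $B_0$ appears in the spectrum of the initial complete graph $G_0$, so the reindexed sums start at $\ell = 1$. Then $\text{mult}_\ell(w) = c_0 m^\ell + j$ for all $\ell \geq 1$, and the two required sums evaluate directly to $\sum_{\ell=1}^n (c_0 m^\ell + j) = c_0(m^{n+1} - m)/(m-1) + jn$ and $\sum_{\ell=1}^n (c_0 m^\ell + j) d^{n-\ell} = c_0 m(d^n - m^n)/(d-m) + j(d^n - 1)/(d-1)$, which are precisely the third and fourth identities.

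The whole verification is routine and I do not anticipate a conceptual obstacle. The only care needed is bookkeeping of the base cases $\ell \in \{0,1\}$ for the $B_1$ formulas and remembering that $c_w^0, c_w^1$ denote base-case multiplicity constants rather than powers of $c_w$. To minimize arithmetic slips I would keep the denominators $(d-m)$ and $(m-1)$ uncombined throughout the calculation and only rearrange into the target form at the very end.
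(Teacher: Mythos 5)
Your proposal is correct and follows exactly the route the paper indicates: the paper offers no written proof beyond noting that the sums "can be immediately calculated by the fact that $mult_n^k=mult_{n-k}$ and using the multiplicities formulas," which is precisely your reindexing-plus-geometric-series computation, and your evaluations (including the identification $\frac{m^2d^n-dm^{n+1}}{d(d-m)}=\frac{m^2d^{n-1}-m^{n+1}}{d-m}$ and the vanishing of the $\ell=0$ term for $w\in B_0$) all check out.
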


We are now ready to prove the main theorem. The proof will be done in the following steps. First we will give a general formula for the spectral zeta function and then observe that all the poles all the imaginary axis are cancelled. Then we will evaluate the regularized determinant and in the end evaluate the discrete determinants and observe the appearance of the logarithm of the regularized determinant in the logarithm of the discrete ones.

\begin{proof}

We have from \cite{grabner,teplyaev} that the spectral zeta function can be meropmorphically extended and is of the form 
$$\zeta_\Delta(s)=\sum_{w\in W}B_w(\lambda^{-s})\zeta_{\Phi,w}(s)$$ where $B_w$ are the generating functions of the multiplicities of eigenvalues, i.e. $B_w(x)=\sum_n mult_n(w)x^n$. We know from \cite{grabner,teplyaev} that the polynomial zeta functions $\zeta_{\Phi,w}(s)$ have no poles on the imaginary axis, so we need to prove that the geometric parts don't have either. Since $B_0 \cap B_1 = \emptyset $, we can write
$$\zeta_{\Delta}(s)=\sum_{w\in W}B_w(\lambda^{-s})\zeta_{\Phi,w}(s)=\sum_{w\in B_1}B_w(\lambda^{-s})\zeta_{\Phi,w}(s)+\sum_{w\in B_0}B_w(\lambda^{-s})\zeta_{\Phi,w}(s).$$
For $w\in B_1$, we have that $B_w(\lambda^{-s})=\sum_{k=0}^{1}c_w^k\lambda^{-ks}+\sum_{k=2}^{\infty}c_wm^k\lambda^{-ks}=\sum_{k=0}^{1}c_w^k\lambda^{-ks}+\frac{c_w(m\lambda^{-s})^{2}}{1-m\lambda^{-s}}$ and thus in the sum for $w\in B_1$ we have no poles on the imaginary axis. Now, for $w\in B_0$, we have that $B_w(\lambda^{-s})=\sum_{k=1}^{\infty}(c_0m^k+j)\lambda^{-ks}=\frac{c_0m\lambda^{-s}}{1-m\lambda^{-s}}+\frac{j}{\lambda^{s}-1}$ but we will now show that these poles on the imaginary axis are in fact cancelled independently of $j$. Indeed, by lemma \ref{lem:preit} we have that
\begin{equation*}
\begin{split}
\sum_{w\in B_0} B_w(\lambda^{-s})\zeta_{\Phi,w}(s)&=\sum_{w\in B_0}\left(\frac{c_0m\lambda^{-s}}{1-m\lambda^{-s}}+\frac{j}{\lambda^{s}-1}\right)\zeta_{\Phi,w}(s)\\
&=\left(\frac{c_0m\lambda^{-s}}{1-m\lambda^{-s}}+\frac{j}{\lambda^{s}-1}\right)(\lambda^s-1)\zeta_{\Phi,0}(s)
\end{split}
\end{equation*}
 and thus we have the cancellation of poles on the imaginary axis. This concludes the first part of the proof and we have obtained the following formula for the spectral zeta function

\begin{equation}
\begin{split}
\zeta_{\Delta}(s) =& \sum_{w\in B_1} \left(\sum_{k=0}^{1}c_w^k\lambda^{-ks}+\frac{c_w(m\lambda^{-s})^{2}}{1-m\lambda^{-s}}\right) \zeta_{\Phi,w}(s) \\
&+ \left(\frac{c_0m(\lambda^s-1)}{\lambda^s-m}+j\right)\zeta_{\Phi,0}(s).
\end{split}
\label{eq:zetagen}
\end{equation}
Now  by differentiating the spectral zeta function and using the values for the polynomial zetas at the point $s=0$ we can evaluate $-\zeta_{\Delta}'(0)$ to arrive at

\begin{equation}
\begin{split}
\log\det\Delta=&\sum_{w\in B_1}\left(\left(\sum_{k=0}^{1}c_w^k-\frac{c_wm^{2}}{m-1}\right) \left(\log{(-w)} + \frac{\log{a_d}}{d-1}\right)\right)\\
& -\frac{c_0m}{m-1}\log{\lambda}+ j\frac{\log{a_d}}{d-1}.
\end{split}
\label{eq:regdet}
\end{equation}

We have thus obtained the value of the regularized determinant and we will now show that this expression appears as a constant in the formula for the discrete determinants. We know that all eigenvalues are positive, so the minus signs in the formula \eqref{eq:discretedet} will all be canceled out anyway so we omit them. Moreover, to be consistent with the notation in \cite{grabner, derfel2012laplace} here we have used a spectral decimation polynomial that creates negative values $w<0$ but since the discrete graph Laplacian has positive eigenvalues all we have to do is replace $w$ with $-w$ and we can thus rewrite \eqref{eq:discretedet} in a decomposed form as
\begin{equation*}
\begin{split}
\det\Delta_n =& \prod_{w\in A}(-w)^{mult_n(w)}\prod_{w\in B_0} (-w)^{\sum_{k=0}^n mult_n^k(w)} \left(\frac{1}{a_d}\right)^{\sum_{k=0}^n mult_n^k(w)\left(\frac{d^k-1}{d-1}\right)} \\
&\cdot \prod_{w\in B_1} (-w)^{\sum_{k=0}^n mult_n^k(w)} \left(\frac{1}{a_d}\right)^{\sum_{k=0}^n mult_n^k(w)\left(\frac{d^k-1}{d-1}\right)}.
\end{split}
\end{equation*}
By using the fact that the set $B_0$ has cardinality equal to $d-1$ and that all elements in $B_0$ and their preiterates have the same multiplicities with one another which means that $mult_n^k(w)$ is independent of $w$, the $B_0$ part of the product becomes 
\begin{equation*}
\begin{split}
&\prod_{w \in B_0} (-w)^{\sum_{k=0}^n \text{mult}_n^k(w)} a_d^{-\sum_{k=0}^{n} \text{mult}_n^k(w) \left(\frac{d^k-1}{d-1}\right)}= \left(\prod_{w \in B_0} (-w)\right)^{\sum_{k=0}^n \text{mult}_n^k(w)} \hspace{-14mm} a_d^{ -\sum_{k=0}^{n} \text{mult}_n^k(w) (d^{k}-1)}\\
&=\left(\frac{\lambda}{a_d}\right)^{\sum_{k=0}^n \text{mult}_n^k(w)} a_d^{ -\sum_{k=0}^{n} \text{mult}_n^k(w) (d^k-1)} = \lambda^{\sum_{k=0}^n \text{mult}_n^k(w)} a_d^{ -\sum_{k=0}^{n} \text{mult}_n^k(w) d^k} 
\end{split}
\end{equation*}
where we have used lemma \ref{lem: prod} in the second equality.

For $w\in A$ and $n\geq 2$, we have that $mult_n(w)=c_wm^n$ so when taking logarithms the first term can be written as $\sum_{w\in A}c_wm^n \log{(-w)}$. Using lemma \ref{lem:sums} and combining everything we obtain that for $n\geq 2$
\begin{equation*}
\begin{split}
&\log\det\Delta_n = \sum_{w\in A} c_w m^n \log{(-w)} + \sum_{w\in B_1} (c_w^0 +c_w^1 +\frac{c_w m^{n+1}-c_w m^2}{m-1}) \log{(-w)}\\
&-\sum_{w \in B_1}\left(c_w^0 \frac{d^n-1}{d-1} + c_w^1 \frac{d^{n-1} -1}{d-1} + \frac{c_w}{d-1}\left(\frac{m^2 d^n - dm^{n+1}}{d(d-m)} + \frac{m^2-m^{n+1}}{m-1}\right) \right)\log{a_d}\\
&+(c_0\frac{m^{n+1}-m}{m-1} +jn)\log{\lambda} - (c_0\frac{m(d^n-m^n)}{d-m} +j\frac{d^n-1}{d-1})\log{a_d}
\end{split}
\end{equation*}
and by separating the constant term from the terms dependent on $n$ we can rewrite it as 
\begin{equation*}
\begin{split}
\log\det\Delta_n &= \sum_{w\in A} c_w m^n \log{(-w)} + \sum_{w\in B_1} \frac{c_w m^{n+1}}{m-1} \log{(-w)}\\
& -\sum_{w \in B_1}\left(\frac{c_w^0d^n+c_w^1d^{n-1}}{d-1} + \frac{c_w}{d-1}\left(\frac{m^2 d^n - dm^{n+1}}{d(d-m)} - \frac{m^{n+1}}{m-1}\right) \right)\log{a_d} \\
& +(c_0\frac{m^{n+1}}{m-1} +jn)\log{\lambda} - (c_0\frac{m(d^n-m^n)}{d-m} +j\frac{d^n}{d-1})\log{a_d} + \log\det\Delta.
\end{split}
\end{equation*}

This formula can now be simplified by the following observation. It was proven in \cite{teplstei} that the only poles of the spectral zeta function can lie either on the imaginary axis or on $\frac{d_S}{2}= \frac{\log m}{\log\lambda}$. On the other hand, the fact that the polynomial zeta functions $\zeta_{\Phi,w}(s)$ have their poles on the imaginary line $Re(s) = \frac{\log d}{\log\lambda}$ must necessarily imply that the zeroes of the geometric part cancel out those poles, a phenomenon which was first observed for the examples used at \cite{grabner, derfel2012laplace, teplyaev2004spectral}. Looking at formula \eqref{eq:zetagen} for $\lambda^s =d $ the geometric part being $0$ means that 
$$\sum_{w\in B_1} \left(c_w^0 + \frac{c_w^1}{d} + \frac{c_w m^2}{d(d-m)}\right) + \frac{c_0m(d-1)}{d-m} + j =0 $$
which simplifies the $\log\det\Delta_n$ formula above since all $d^n$ terms are now in fact cancelled. In the end, grouping up the $m^n$ terms gives us our result
\begin{equation}
\label{eq:logdisc}
\log \det \Delta_n = c\, m^n+nj\log{\lambda} +\log\det\Delta
\end{equation}
where 
$$c=  \sum_{w\in A} c_w \log{(-w)}  +  \sum_{w \in B_1} \frac{c_w m}{m-1}\left(\log{(-w)}+ \frac{\log{a_d}}{d-m}\right)  + \frac{c_0m \log{\lambda}}{m-1}  + \frac{c_0m\log{a_d}}{d-m}. $$
\end{proof}

A simpler scenario can also happen in some one dimensional sets where $\frac{\log d}{\log\lambda} = \frac{d_S}{2}$ and thus $d=m$ and if we have that the spectral zeta is of the form 
\begin{equation}
\label{eq:onedim}   
\zeta_{\Delta}(s) = \zetaa + \zetab .
\end{equation}
where $g_0$ is the non-zero eigenvalue in the $G_0$ graph. The multiplicities in the discrete graph approximation are now of a different form and every eigenvalue can be obtained as a preiterate of $0$ and $g_0$ with constant multiplicities and thus we lack any geometric part. Note that this need not mean that we never encounter a forbidden eigenvalue by taking the pre-image of $0$ or $g_0$ but rather that by using lemma \ref{lem:preit} we can simplify the spectral zeta function to be of the form of \eqref{eq:onedim}. The unit interval giving rise to the double unit interval by taking a separate copy of it and gluing its end points (which is just a circle) and some of its variations are such cases and we will include them in the example section. We omit here the details but such a spectral zeta function also does not have any poles on the imaginary axis and by using the special values of the polynomial zeta functions we can immediately obtain that 
\begin{equation}
\log\det\Delta = -\zeta'_{\Delta}(0) = \log g_0 +2\frac{\log a_d}{d-1}
\label{eq:zetareg}
\end{equation}
and by calculating the discrete determinant we get that
\begin{equation}
\log\det\Delta_n = -\frac{2\log a_d}{d-1}d^n + n\log\lambda + \log\det\Delta.
\label{eq:zetadisc}
\end{equation}
It would be interesting to have a more theoretical explanation as to why the logarithm of the time-scaling factor $\lambda$ appears as a coefficient of the term $n$ in all scenarios we have explored.

\section{Examples}

We will now look at some examples to illustrate the above notation and application of our formulas in greater clarity and show how previous existing explicitely calculated results in the literature can also follow from our formulas, as well as at the end present an original new example which we find interesting because the theory applies despite a lack of actual self-similarity. The first two examples fall in the case where $\frac{\log d}{\log\lambda} = \frac{d_S}{2}$ and the latter two where $\frac{\log d}{\log\lambda} < \frac{d_S}{2}$.

\subsection{The unit interval}

We will consider the familiar case of the unit interval $I$ which is often used as a familiar prototype in the theory of analysis on fractals since we already know what results we are expected to obtain due to standard analysis. We can take two copies of $I$ gluing them at the boundary points giving us the double unit interval which of course is just a circle. 

It was shown in \cite{teplyaev} that spectral decimation holds with polynomial $R(z) = 2z(2+z)$ and the spectral zeta function of the unit interval is $\zeta_{\Delta} (s) = \frac{4^s}{2}\zetaa$. Note that we have slightly modified that formula to be consistent with our definitions here since there is a slight difference between our work here and the work of \cite{teplyaev}. Namely, in the definitions between the spectral zeta function we have that our $s$ corresponds to $2s$ in \cite{teplyaev} and moreover the Laplacian $2\Delta$ was considered there. 
The Dirichlet spectrum coincides with the Neumann one, except the $0$ eigenvalue, so the spectral zeta function of the double unit interval is equal to $\zeta_{\Delta}(s) = 4^s\zetaa $ which in fact is of the form of \eqref{eq:onedim} and is also equal to
$$\zeta_{\Delta}(s) = \zeta_{\Phi,0}(s) + \zeta_{\Phi,-2}(s)$$
by applying lemma \ref{lem:preit}. We have that $\log\det\Delta = \log 8$ and for the discrete graphs on the double unit interval we can use the formulas \eqref{eq:zetareg} and \eqref{eq:zetadisc} which allow us to recreate the following standard result in the literature but using instead now an analysis on fractals perspective
$$\log\det\Delta_n = -2^{n+1} +n\log 4+\log\det\Delta$$
where $\lambda = 4$ as per \eqref{eq:zetadisc}.

\begin{remark}
We could have of course also calculated the discrete graph determinant by observing that the sequence of graphs are just the cyclic graphs on $V_n = 2^{n+1}$ vertices which have equally many spanning trees and then using the probabilistic version of Kirchhoff's Matrix Tree theorem.    
\end{remark}

\begin{remark}
It was also shown in \cite{teplyaev} that Riemann's function can be viewed from the analysis on fractals perspective and written as $\zeta(s) = \frac{1}{2}(\sqrt{2}\pi)^s\zeta_{\Phi,0}(\frac{s}{2})$.
\end{remark}

\subsection{The double pq-model}

\begin{figure}[ht]
    \centering
    \includegraphics[scale=0.85]{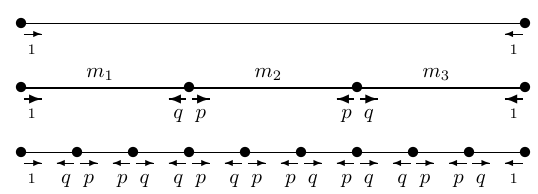}
    \caption{The random walks on the $pq$-model graphs.}
    \label{fig:pq}
\end{figure}

In \cite{teplyaev} the $pq$--model was studied which is just a different Laplacian construction on the unit interval and in Figure \ref{fig:pq} the graph approximations are depicted. We refer to \cite{teplyaev} for more details of the construction. In \cite{chen2018regularized} the double version of the $pq$-model was considered and it was shown that for the double $pq$ model we have $\sigma(\Delta_p,n) = \bigcup_{m=0}^{n-1}R_p^{-m}({0,-2})$ with spectral decimation polynomial
$$R(z)=\frac{1}{pq}(z^2+3z+2+pq)$$
and that
$$\zeta_{\Delta}(s) = \zeta_{\Phi,0}(s) + \zeta_{\Phi,-2}(s).$$
Then applying formula \eqref{eq:zetareg} and \eqref{eq:zetadisc} we obtain that $\log\det\Delta = \log\frac{2}{pq}$
and that
$$\log\det\Delta_n =  \log(pq) \, 3^n  + n\log \lambda + \log\det\Delta.$$
If we instead have applied those formulas for the combinatorial graph Laplacian, which would have slightly altered the spectral decimation polynomial and multiplied all eigenvalues by $2$, then we would have recreated exactly the result as written in \cite{chen2018regularized}. Note that the coefficient of the $n$ term as written in \cite{chen2018regularized} is indeed $\frac{(1-q^2)(1-p^2)}{(pq)^2} = 1+\frac{2}{pq} = \lambda$ in accordance with \eqref{eq:zetadisc}.

\subsection{The double Sierpi\'nski gaskets}

\begin{figure}[ht]
    \centering
    \includegraphics[scale=0.65]{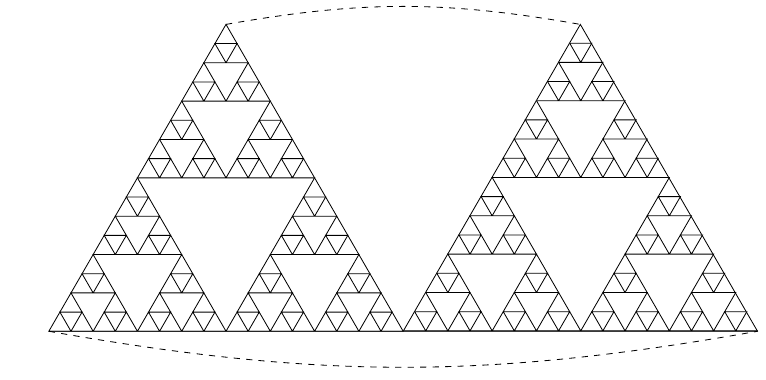}
    \caption{The double Sierpi\'nski gasket for $N=3$.}
    \label{fig:sg}
\end{figure}

We will look at the $N-1$ dimensional SG where it is obtained by starting with the complete graph on $N$ vertices. The standard SG is for $N=3$ and in \cite{chen2018regularized} the double Sierpi\'nski gasket was considered where it is obtained by taking two copies of it and gluing the corresponding boundary points, the two dimensional case is shown in Figure \ref{fig:sg}. We will show how the results of \cite{chen2018regularized} for the double Sierpi\'nski gasket can be obtained directly by applying our formulas \eqref{eq:regdet} and \eqref{eq:logdisc}. In \cite{chen2018regularized} the combinatorial graph Laplacian was studied but instead we will here apply the formula for the probabilistic one but both cases can be recreated using our formulas by using the appropriate spectral decimation polynomial and corresponding sets $A, B_0, B_1$ each time. In any case, due to the regularity of the graphs the determinant is the same up to a different factor of $(\frac{1}{2N-2})^{V_n-1}$ since each eigenvalue is scaled with the same factor and the zero eigenvalue is excluded. It was shown in \cite{fractafold} that the double $SG$ eigenvalues are those of the Dirichlet and Neumann spectra of the standard $SG$ combined. Both the Dirichlet and the Neumann spectra have been evaluated in \cite{fukushima1992spectral, shima1991eigenvalue}. We have that $|V_n|=N^{n+1}$ and the spectral decimation polynomial is $R(z)=z(N+2+(2N-2)z)$. In our language we can write that 
$$A= \{ -\frac{N}{N-1} \}, \, \, B_0= \{-\frac{N+2}{2N-2}\}, \, \, B_1=\{-\frac{1}{N-1},-\frac{N}{2N-2}\}$$
and for the multiplicities, we have that 
\begin{equation*}
\begin{split}
&mult_n(\frac{N}{N-1})=(N-2)N^m \text{ for } n\geq 1 \text{ and thus } c_{\frac{N}{N-1}}= N-2.\\
&mult_n(\frac{N}{2N-2})=\frac{(N-2)}{N}N^{n} \text{ for } n \geq 2 \text{ and } mult_1(\frac{N}{2N-2})=N-1.\\
&\text{Thus, } (c_{\frac{N}{2N-2}}^0,c_{\frac{N}{2N-2}}^1)=(0,N-1) \text{ and } c_{\frac{N}{2N-2}}=\frac{N-2}{N}.\\
& mult_n(\frac{N+2}{2N-2})=\frac{(N-2)}{N}N^{n}+1 \text{ for } n\geq 1 \\
&\text{and therefore }c_{0}=\frac{N-2}{N} \text{ and } j=1 \text{ with } c_{\frac{N+2}{N}}^0 = N-1 .\\
&mult_n(\frac{1}{N-1})=0 \text{ for } n \geq 2 \text{ and } (c_{\frac{1}{N-1}}^0,c_{\frac{1}{N-1}}^1)=(0,1) \text{ and } c_{\frac{1}{N-1}}=0.
\end{split}
\end{equation*}
This gives us that according to \eqref{eq:regdet} and after some calculations that
$$\log\det\Delta = 2\log{2} + \frac{\log{N}}{N-1} - \frac{N-2}{N-1}\log{(N+2)} + \log{(N-1)}$$
and  by \eqref{eq:logdisc} that
$$\log\det\Delta_n = c \,N^n  + n\log \lambda + \log\det\Delta$$
where
$$c=-2\log 2 +\frac{N(N-2)}{N-1}\log N -N\log{(N-1)}+\frac{N-2}{N-1}\log{(N+2)}.$$

\begin{remark}
If we have used the combinatorial graph Laplacian instead then the value $c$ would have been the asymptotic complexity constant of those graphs which coincides with that of the single Sierpi\'nski gaskets.    
\end{remark}

\subsection{The Basilica set}

We will now give a last and original example which is not self-similar in the strict sense but spectral decimation still holds as well as our assumptions so our theorem holds. Specifically, we look at the Basilica Julia set from \cite{rogers2008laplacians}, shown in Figure \ref{fig:basil} where the spectrum was calculated but this connection between the logarithms of the regularized determinant and the discrete one has not been previously observed in the literature to our knowledge. It was shown in \cite{rogers2008laplacians} that $R(z)=z(\frac{2p+1}{p}+\frac{1}{p}z)$ with $m=3$ which is not technically the number of self-similarities since this example is not actually self-similar. Taking from \cite{rogers2008laplacians} the spectrum information we need here we have that $|V_n|=2\cdot 3^n$ and $a_d = \frac{1}{p}, d=2, m=3, \lambda = \frac{2p+1}{p}$ and using our notation here we get $A=\emptyset, \, \, B_0= \{-2p-1\},  \, \, B_1=\{-2q,-2p\}$. For the multiplicities, we have that

\begin{figure}[t]
    \centering
    \includegraphics[scale=0.45]{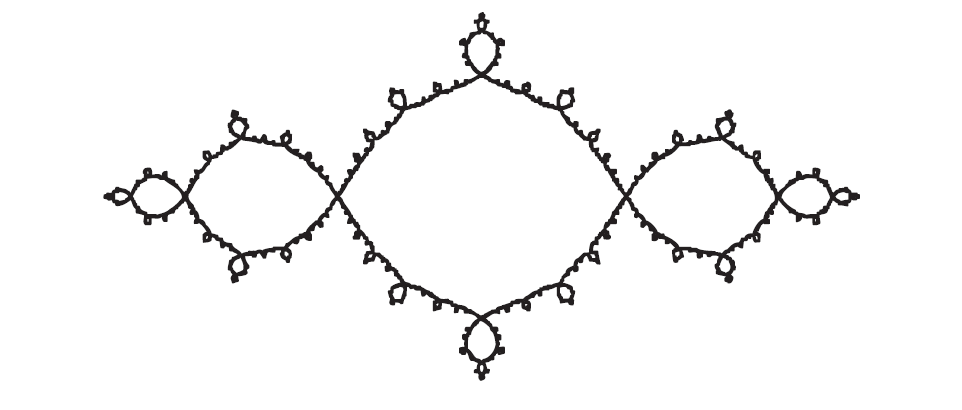}
    \caption{The Basilica set.}
    \label{fig:basil}
\end{figure}

\begin{equation*}
\begin{split}
&mult_n(2p)= 2\cdot 3^{n-1} \text{ for } n\geq 1 \text{ and thus } c_{2p}=\frac{2}{3}.\\
&mult_n(2p+1)=1 \text{ for } n \geq 1 \text{ and thus } j=1 \text{ and } c_{0}=0.\\
& mult_n(2q)=0 \text{ for } n\geq 0 \text{ and thus }c_{2q}=0 \text{ and } c_{2q}^0=1.
\end{split}
\end{equation*}
We obtain the spectral zeta function given by \eqref{eq:zetagen} as follows
$$\zeta_{\Delta}(s)=\zeta_{\Phi,0}(s)+\zeta_{\Phi,-2q}(s)+\frac{2}{\lambda^s-3}\zeta_{\Phi,-2p}(s)$$
and by \eqref{eq:regdet} the regularized determinant has logarithm
$$\log\det\Delta=-\zeta_{\Delta}'(0)=\log\frac{q}{p^2}$$
and for the discrete graph Laplacian applying formula \eqref{eq:logdisc} gives us the following connection between the discrete and regularized logarithms
$$\log\det\Delta_n=c \,  3^n +n\log{\lambda}+\log\det\Delta .$$
where $c=\log{(2p^2)}$.

\section*{Acknowledgments}
The author is grateful to Alexander Teplyaev, Joe P. Chen and Anders Karlsson for helpful discussions, as well as the Mathematics Department of the University of Connecticut and University of Uppsala where part of this research was conducted at.

\bibliographystyle{abbrv}
\bibliography{Biblio}

\end{document}